\numberwithin{equation}{section}
\theoremstyle{plain}
\newtheorem{theorem}{Theorem}
\newtheorem{proposition}[theorem]{Proposition}
\newtheorem{remark}[theorem]{Remark}
\begin{document}

\title[A CIR Process with Hawkes Jumps]{Limit Theorems for a Cox-Ingersoll-Ross Process with Hawkes Jumps}

\author{LINGJIONG ZHU}
\address
{Courant Institute of Mathematical Sciences\newline
\indent New York University\newline
\indent 251 Mercer Street\newline
\indent New York, NY-10012\newline
\indent United States of America}
\email{ling@cims.nyu.edu}

\date{19 April 2013. \textit{Revised:} 28 August 2013}
\subjclass[2000]{60G07, 60G55, 60F05,60F10.}
\keywords{Cox-Ingersoll-Ross process, point processes, Hawkes processes, self-exciting processes, 
central limit theorem, large deviations.}

\begin{abstract}
In this paper, we propose a stochastic process, which is a Cox-Ingersoll-Ross process with Hawkes jumps.
It can be seen as a generalization of the classical Cox-Ingersoll-Ross process and the classical Hawkes process
with exponential exciting function. Our model is a special case of the affine point processes. Laplace transforms
and limit theorems have been obtained, including law of large numbers, central limit theorems and large deviations.
\end{abstract}

\maketitle

\section{Introduction and Main Results}

\subsection{Cox-Ingersoll-Ross Process}

A Cox-Ingersoll-Ross process is a stochastic process $r_{t}$ satisfying the following stochastic differential equation,
\begin{equation}
dr_{t}=b(c-r_{t})dt+\sigma\sqrt{r_{t}}dW_{t},
\end{equation}
where $W_{t}$ is a standard Brownian motion, $b,c,\sigma>0$ and $2bc\geq\sigma^{2}$. The process is proposed by Cox, Ingersoll and Ross
in Cox et al. \cite{Cox} to model the short term interest rate. Under the assumption $2bc\geq\sigma^{2}$, Feller \cite{Feller}
proved that the process is non-negative. Given $r_{0}$, it is well known that $\frac{4b}{\sigma^{2}(1-e^{-bt})}r_{t}$
follows a noncentral $\chi^{2}$ distribution with degree of freedom $\frac{4bc}{\sigma^{2}}$ and 
non-centrality parameter $\frac{4b}{\sigma^{2}(1-e^{-bt})}r_{0}e^{-bt}$. As $t\rightarrow\infty$, $r_{t}\rightarrow r_{\infty}$,
where $r_{\infty}$ follows a Gamma distribution with shape parameter $\frac{2bc}{\sigma^{2}}$ and scale parameter $\frac{\sigma^{2}}{2b}$.
The conditional first and second moments are given by, $s>t$,
\begin{align}
&\mathbb{E}[r_{s}|r_{t}]=r_{t}e^{-b(s-t)}+c(1-e^{-b(s-t)})\label{FirstMomentCIR}
\\
&\mathbb{E}[r_{s}^{2}|r_{t}]
=r_{t}\left(2c+\frac{\sigma^{2}}{b}\right)e^{-b(s-t)}+\left(r_{t}^{2}-r_{t}\frac{\sigma^{2}}{b}-2r_{t}c\right)e^{-2b(s-t)}\label{SecondMomentCIR}
\\
&\qquad\qquad\qquad
+\left(\frac{c\sigma^{2}}{2b}+c^{2}\right)\left(1-e^{-b(s-t)}\right)^{2}.\nonumber
\end{align}

The Cox-Ingersoll-Ross process has been widely applied in finance, mostly in short term interest rate, see e.g.  Cox et al. \cite{Cox}
and the Heston stochastic volatility model, see e.g.  Heston \cite{Heston}.
Other applications include the modelling of mortality intensities, see e.g.  extended Cox-Ingersoll-Ross process used by
Dahl \cite{Dahl} and of default intensities in credit risk models, 
see e.g.  as a special case of affine process by Duffie \cite{Duffie}.

A natural generalization of the classical Cox-Ingersoll-Ross process takes into account the jumps, i.e.
\begin{equation}
dr_{t}=b(c-r_{t})dt+\sigma\sqrt{r_{t}}dW_{t}+adN_{t},
\end{equation}
where $N_{t}$ is a homogeneous Poisson process with constant intensity $\lambda>0$. But in the real world, the occurence of events
may not be time-homogeneous and it should have dependence over time. 
Errais et al. \cite{Errais} pointed out ``The collapse of Lehman Brothers brought the financial system to the brink of a breakdown.
The dramatic repercussions point to the exisence of feedback phenomena that are channeled through the complex web of informational and contractual
relationships in the economy... This and related episodes motivate the design of models of correlated default timing that incorporate
the feedback phenomena that plague credit markets.''
According to Kou and Peng \cite{Kou}, ``We need better models to incorporate the default clustering effect, i.e., one default
event tends to trigger more default...''

In this respect, it is natural to replace Poisson process
by a simple point process which can describe the time dependence in a natural way. 
The Hawkes process, a simple point process that has self-exciting property and clustering effect becomes
a natural choice.

\subsection{Hawkes Process}

A Hawkes process is a simple point process $N$ admitting an intensity
\begin{equation}
\lambda_{t}:=\lambda\left(\int_{-\infty}^{t}h(t-s)N(ds)\right),\label{Hawkesdynamics}
\end{equation}
where $\lambda(\cdot):\mathbb{R}^{+}\rightarrow\mathbb{R}^{+}$ is locally integrable, left continuous, 
$h(\cdot):\mathbb{R}^{+}\rightarrow\mathbb{R}^{+}$ and
we always assume that $\Vert h\Vert_{L^{1}}=\int_{0}^{\infty}h(t)dt<\infty$. 
In \eqref{Hawkesdynamics}, $\int_{-\infty}^{t}h(t-s)N(ds)$ stands for $\int_{(-\infty,t)}h(t-s)N(ds)=\sum_{\tau<t}h(t-\tau)$, where
$\tau$ are the occurences of the points before time $t$.

In the literature, $h(\cdot)$ and $\lambda(\cdot)$ are often referred to
as exciting function and rate function respectively. 
An important observation is that a Hawkes process is Markovian if and only if $h(\cdot)$ is an exponential function.
One usually assumes that $\lambda(\cdot)$ is increasing and $h(\cdot)$ is decreasing.

A Hawkes process is linear if $\lambda(\cdot)$ is linear and it is nonlinear otherwise.
Linear Hawkes process, i.e. the classical Hawkes process, is named after Hawkes, who first invented the model
in Hawkes \cite{Hawkes}. Nonlinear Hawkes process was first introduced by Br\'{e}maud and Massouli\'{e} \cite{Bremaud}.

By the definition of Hawkes process, it has the self-exciting property, i.e. the intensity $\lambda_{t}$ increases
when you witness a jump. It therefore creates a clustering effect, which is to model the default clustering in finance.
When you do not witness new jumps, the intensity $\lambda_{t}$ decreases as $h(\cdot)$ decays.

The law of large numbers and central limit theorems for linear Hawkes process have 
been obtained in Hawkes and Oakes \cite{HawkesII}. 
The law of large numbers and central limit theorem have also been studied in Bacry et al. \cite{Bacry}
as a special case of multivariate Hawkes processes. 
The large deviation principle for linear Hawkes process was obtained in Bordenave and Torrisi \cite{Bordenave}.
The moderate deviation principle for linear Hawkes process
was obtained in Zhu \cite{ZhuIV}. For nonlinear Hawkes process, the central limit thereom was obtained
in Zhu \cite{ZhuIII} and the large deviations have been studied in Zhu \cite{ZhuI} and Zhu \cite{ZhuII}.

The central limit theorem of Hawkes process has been applied to study the high frequency trading and the microstructure in finance, 
see e.g. Bacry et al. \cite{Bacry} and Bacry et al. \cite{BacryII} and the large deviations result has 
been applied to study the ruin probabilities in insurance, see e.g. Stabile and Torrisi \cite{Stabile} and Zhu \cite{ZhuV}.

\subsection{A Cox-Ingersoll-Ross Process with Hawkes Jumps}

In this paper, we propose a stochastic process $r_{t}$ that satisfies the following stochastic differential equation,
\begin{equation}
dr_{t}=b(c-r_{t})dt+adN_{t}+\sigma\sqrt{r_{t}}dW_{t},\label{dynamics}
\end{equation}
where $W_{t}$ is a standard Brownian motion and $N_{t}$ is a simple point
process with intensity $\lambda_{t}:=\alpha+\beta r_{t}$ at time $t$.
We assume that $a,b,c,\alpha,\beta,\sigma>0$ and
\begin{itemize}
\item
$b>a\beta$. This condition is needed to guarantee that there exists a unique stationary process $r_{\infty}$
which satisfies the dynamics \eqref{dynamics}.

\item
$2bc\geq\sigma^{2}$. This condition is needed to guarantee that $r_{t}\geq 0$ with probability $1$.
Indeed, we know that $r_{t}$ stochastically dominates the classical Cox-Ingersoll-Ross process and hence $2bc\geq\sigma^{2}$
is enough to guarantee $r_{t}\geq 0$. On the other hand, on any given time interval, the probability
that there is no jump is always positive, which implies that $2bc\geq\sigma^{2}$ is needed to guarantee positivity.
\end{itemize}

The Cox-Ingersoll-Ross process with Hawkes jumps preserves the mean-reverting and non-negative properties
of the classical Cox-Intersoll-Ross process. In addition, it contains the Hawkes jumps, which have the self-exciting property
create a clustering effect.

Clearly, the Cox-Ingersoll-Ross process we proposed in \eqref{dynamics} includes the classical Cox-Ingersoll-Ross process
and the classical linear Hawkes process with exponential exciting function. We summarize this in the following.

\begin{enumerate}
\item
When $a=0$ or $\alpha=\beta=0$, it reduces to the classical Cox-Ingersoll-Ross process, i.e.
\begin{equation*}
dr_{t}=b(c-r_{t})dt+\sigma\sqrt{r_{t}}dW_{t}.
\end{equation*}
\item
When $\beta=0$ and $a,\alpha>0$, it reduces to the Cox-Ingersoll-Ross process with Poisson jumps, i.e.
\begin{equation*}
dr_{t}=b(c-r_{t})dt+\sigma\sqrt{r_{t}}dW_{t}+adN_{t},
\end{equation*}
where $N_{t}$ is a homogeneous Poisson process with intensity $\alpha$.
\item
When $c=0$ and $\sigma=0$, $N_{t}$ reduces to a Hawkes process with intensity $\lambda_{t}=\alpha+\beta r_{t}$, where
\begin{equation*}
dr_{t}=-br_{t}dt+adN_{t},
\end{equation*}
and it is easy to see that the intensity $\lambda_{t}$ indeed satisfies
\begin{equation*}
\lambda_{t}=\alpha+\beta\int_{0}^{t}ae^{-b(t-s)}N(ds),
\end{equation*}
which implies that $N_{t}$ is a classical linear Hawkes process with $\lambda(z)=\alpha+\beta z$ 
and $h(t)=ae^{-bt}$.
\end{enumerate}

It is easy to observe that $r_{t}$ is Markovian with generator
\begin{equation}
\mathcal{A}f(r)=bc\frac{\partial f}{\partial r}-br\frac{\partial f}{\partial r}
+\frac{1}{2}\sigma^{2}r\frac{\partial^{2}f}{\partial r^{2}}
+(\alpha+\beta r)[f(r+a)-f(r)].
\end{equation}

\subsection{Main Results}

In this section, we will summarize the main results of this paper. We will start with
conditional first and second moments of $r_{t}$ and then move onto the limit theorems, i.e. the law
of large numbers, central limit theorems and large deviations. Next, we show that there exists
a unique stationary probability measure for $r_{t}$ and we obtain the Laplace transform
of $r_{t}$ and $r_{\infty}$. Finally, we consider a short rate interest model.

The proofs will be given in Section \ref{PFs}.

The following proposition gives the formulas for the conditional first moment and second moment of
the Cox-Ingersoll-Ross process with Hawkes jumps.

\begin{proposition}\label{FirstandSecondMoments}
(i) For any $s>t$, we have the following conditional expectation,
\begin{equation}\label{FirstMoment}
\mathbb{E}[r_{s}|r_{t}]
=\frac{bc+a\alpha}{b-a\beta}-e^{-(b-a\beta)(s-t)}\left[\frac{bc+a\alpha}{b-a\beta}-r_{t}\right].
\end{equation}

(ii) For any $s>t$, we have the following conditional expectation,
\begin{align}\label{SecondMoment}
&\mathbb{E}[r_{s}^{2}|r_{t}]
\\
&=r_{t}^{2}e^{-2(b-a\beta)(s-t)}\nonumber
\\
&\qquad
+\left[(2bc+\sigma^{2}+2a\alpha+a^{2}\beta)\frac{bc+a\alpha}{2(b-a\beta)^{2}}+\frac{a^{2}\alpha}{2(b-a\beta)}\right]
[1-e^{-2(b-a\beta)(s-t)}]\nonumber
\\
&\qquad\qquad-(2bc+\sigma^{2}+2a\alpha+a^{2}\beta)\frac{bc+a\alpha}{(b-a\beta)^{2}}[e^{-(b-a\beta)(s-t)}-e^{-2(b-a\beta)(s-t)}]\nonumber
\\
&\qquad\qquad\qquad
+(2bc+\sigma^{2}+2a\alpha+a^{2}\beta)\frac{r_{t}}{b-a\beta}[e^{-(b-a\beta)(s-t)}-e^{-2(b-a\beta)(s-t)}].\nonumber
\end{align}
\end{proposition}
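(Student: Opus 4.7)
The natural approach is to exploit the Markov property via the generator $\mathcal{A}$ displayed just before the statement. For a sufficiently regular function $f$, Dynkin's formula gives
\begin{equation*}
\frac{d}{ds}\mathbb{E}[f(r_s)\mid r_t]=\mathbb{E}[\mathcal{A}f(r_s)\mid r_t],
\end{equation*}
so if $\mathcal{A}f$ is polynomial in $r$ of degree $\leq \deg f$, we obtain a closed ODE for the conditional moments. This is exactly what happens here for $f(r)=r$ and $f(r)=r^{2}$.

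For part (i), I would set $u(s):=\mathbb{E}[r_s\mid r_t]$ and compute
\begin{equation*}
\mathcal{A}r= bc-br+a(\alpha+\beta r)= (bc+a\alpha)-(b-a\beta)r,
\end{equation*}
which gives the linear ODE $u'(s)=(bc+a\alpha)-(b-a\beta)u(s)$ with initial condition $u(t)=r_t$. Integrating (with integrating factor $e^{(b-a\beta)s}$) and rearranging yields \eqref{FirstMoment}. The hypothesis $b>a\beta$ is not needed for this step, but it is what makes the mean revert to the level $(bc+a\alpha)/(b-a\beta)$.

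For part (ii), I would similarly set $v(s):=\mathbb{E}[r_s^{2}\mid r_t]$ and compute
\begin{equation*}
\mathcal{A}r^{2}=2bcr-2br^{2}+\sigma^{2}r+(\alpha+\beta r)(2ar+a^{2})=a^{2}\alpha+A\,r-2(b-a\beta)r^{2},
\end{equation*}
where $A:=2bc+\sigma^{2}+2a\alpha+a^{2}\beta$. This produces the inhomogeneous linear ODE
\begin{equation*}
v'(s)=a^{2}\alpha+A\,u(s)-2(b-a\beta)v(s),\qquad v(t)=r_{t}^{2},
\end{equation*}
in which $u(s)$ has already been determined in (i). Solving by the integrating factor $e^{2(b-a\beta)s}$ and substituting the explicit form of $u(s)$, the forcing term $A\,u(s)$ splits into a constant part and an $e^{-(b-a\beta)s}$ part; the first contributes the $[1-e^{-2(b-a\beta)(s-t)}]$ terms and the second contributes the $[e^{-(b-a\beta)(s-t)}-e^{-2(b-a\beta)(s-t)}]$ terms, after multiplication by a factor $1/(b-a\beta)$ coming from integration. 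The $a^{2}\alpha$ constant combines with the $A(bc+a\alpha)/(b-a\beta)$ constant piece from $u(s)$ to give the bracketed coefficient in front of $[1-e^{-2(b-a\beta)(s-t)}]$, while the $r_t$-dependent part of $u(s)$ contributes the final line of \eqref{SecondMoment}.

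The main obstacle is purely bookkeeping: keeping track of the several exponential modes and grouping the resulting constants into the particular form stated. Justifying the use of Dynkin's formula is routine here since $r_t$ has finite polynomial moments (inherited, up to the jump contribution, from the stochastic dominance by the classical CIR process mentioned in the hypotheses), so a standard localization argument removes any integrability issues.
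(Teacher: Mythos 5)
Your proposal is correct and follows essentially the same route as the paper: the paper obtains the identical ODEs by taking expectations in the SDE for part (i) and by applying It\^{o}'s formula to $r_{t}^{2}$ and taking expectations for part (ii), which is just the Dynkin/generator computation in different packaging, and it then solves them with the same integrating factors $e^{(b-a\beta)s}$ and $e^{2(b-a\beta)s}$. Your explicit expressions $\mathcal{A}r=(bc+a\alpha)-(b-a\beta)r$ and $\mathcal{A}r^{2}=a^{2}\alpha+(2bc+\sigma^{2}+2a\alpha+a^{2}\beta)r-2(b-a\beta)r^{2}$ match the coefficients appearing in the paper's proof exactly.
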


\begin{remark}
Let $a=0$ in \eqref{FirstMoment}, we get $\mathbb{E}[r_{s}|r_{t}]=c-e^{-b(s-t)}(c-r_{t})=r_{t}e^{-b(s-t)}+c(1-e^{-b(s-t)})$,
which recovers \eqref{FirstMomentCIR}.
Similarly, by letting $a=0$ in \eqref{SecondMoment}, we recover \eqref{SecondMomentCIR}.
\end{remark}

\begin{theorem}[Law of Large Numbers]\label{LLN}
For any $r_{0}:=r\in\mathbb{R}^{+}$,

(i)
\begin{equation}
\frac{1}{t}\int_{0}^{t}r_{s}ds\rightarrow\frac{bc+a\alpha}{b-a\beta},
\quad\text{in $L^{2}(\mathbb{P})$ as $t\rightarrow\infty$}.
\end{equation}

(ii)
\begin{equation}
\frac{N_{t}}{t}\rightarrow\frac{b(\alpha+\beta c)}{b-a\beta},
\quad\text{in $L^{2}(\mathbb{P})$ as $t\rightarrow\infty$}.
\end{equation}
\end{theorem}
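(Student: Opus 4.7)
For part (i), the strategy is to control separately the mean and the variance of $t^{-1}\!\int_0^t r_s\,ds$ using the two formulas of Proposition~\ref{FirstandSecondMoments}. Set $\mu := (bc+a\alpha)/(b-a\beta)$. Specializing \eqref{FirstMoment} with initial time $0$ yields $\mathbb{E}[r_s] = \mu - e^{-(b-a\beta)s}(\mu - r_0)$, from which
\[
\mathbb{E}\Bigl[\tfrac{1}{t}\!\int_0^t r_s\,ds\Bigr]
= \mu - \tfrac{1 - e^{-(b-a\beta)t}}{(b-a\beta)\,t}\bigl(\mu - r_0\bigr)
\longrightarrow \mu.
\]

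To upgrade this to $L^2$ I need the variance of the time average to vanish. Using the Markov property together with the conditional version of \eqref{FirstMoment}, I get $\mathrm{Cov}(r_s, r_u) = e^{-(b-a\beta)(u-s)}\,\mathrm{Var}(r_s)$ for $u \geq s$. The second moment formula \eqref{SecondMoment} exhibits $\mathbb{E}[r_s^2]$ as a linear combination of $1$, $e^{-(b-a\beta)s}$ and $e^{-2(b-a\beta)s}$; since $b > a\beta$ every exponential factor is bounded by $1$, so $C := \sup_{s \ge 0}\mathrm{Var}(r_s) < \infty$. Then
\[
\mathrm{Var}\Bigl(\tfrac{1}{t}\!\int_0^t r_s\,ds\Bigr)
= \tfrac{2}{t^2}\int_0^t\!\!\int_s^t e^{-(b-a\beta)(u-s)}\,\mathrm{Var}(r_s)\,du\,ds
\leq \tfrac{2C}{(b-a\beta)\,t},
\]
which tends to $0$ and completes part (i).

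For part (ii), I would use the Doob--Meyer decomposition of the simple point process $N$: writing $\lambda_s = \alpha + \beta r_s$,
\[
N_t = M_t + \int_0^t \lambda_s\,ds = M_t + \alpha t + \beta\!\int_0^t r_s\,ds,
\]
where $M_t$ is a locally square-integrable martingale with predictable quadratic variation $\langle M\rangle_t = \int_0^t \lambda_s\,ds$. Dividing by $t$,
\[
\frac{N_t}{t} = \alpha + \beta\cdot\frac{1}{t}\!\int_0^t r_s\,ds + \frac{M_t}{t}.
\]
The middle term converges in $L^2$ to $\beta\mu$ by part (i), while $\mathbb{E}[M_t^2] = \mathbb{E}[\langle M\rangle_t] = \alpha t + \beta\!\int_0^t \mathbb{E}[r_s]\,ds = O(t)$, so $M_t/t \to 0$ in $L^2$. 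Summing yields the advertised limit $\alpha + \beta\mu = b(\alpha+\beta c)/(b-a\beta)$.

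The substantive point is not hard: everything reduces to having uniformly bounded second moments of $r_s$, which is secured by the assumption $b > a\beta$ via \eqref{SecondMoment}. Once that bound is in hand, both parts are transparent consequences of Proposition~\ref{FirstandSecondMoments} and the martingale structure of $N$; the only thing one must be careful not to botch is the covariance identity, where one needs the Markov property of $r$ to apply the conditional form of \eqref{FirstMoment} inside the expectation.
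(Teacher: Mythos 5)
Your proof is correct and follows essentially the same route as the paper: part (i) rests on the Markov property combined with the conditional mean formula \eqref{FirstMoment} and the uniform second-moment bound from \eqref{SecondMoment} (you merely package the double integral as a covariance computation rather than expanding $\mathbb{E}\bigl[\bigl(\int_0^t r_s\,ds\bigr)^2\bigr]$ directly), and part (ii) uses the identical compensated-martingale argument with $\mathbb{E}[M_t^2]=\mathbb{E}\bigl[\int_0^t\lambda_s\,ds\bigr]=O(t)$. No gaps.
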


\begin{theorem}[Central Limit Theorem]\label{CLT}
For any $r_{0}:=r\in\mathbb{R}^{+}$,

(i)
\begin{equation}
\frac{\int_{0}^{t}r_{s}ds-\frac{bc+a\alpha}{b-a\beta}t}{\sqrt{t}}
\rightarrow N\left(0,\frac{a^{2}\alpha(b-a\beta)+(a^{2}\beta+\sigma^{2})(bc+a\alpha)}{(b-a\beta)^{3}}\right),
\end{equation}
in distribution as $t\rightarrow\infty$.

(ii)
\begin{equation}
\frac{N_{t}-\frac{b(\alpha+\beta c)}{b-a\beta}t}{\sqrt{t}}
\rightarrow N\left(0,\frac{b^{3}a^{2}(\alpha+\beta c)+4\sigma^{2}b^{2}(bc+a\alpha)}{a^{2}(b-a\beta)^{3}}\right),
\end{equation}
in distribution as $t\rightarrow\infty$.
\end{theorem}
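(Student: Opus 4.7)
The plan is to apply a martingale central limit theorem (Rebolledo, or equivalently Ethier--Kurtz Theorem~7.1.4) to compensated versions of the two noise sources. Write $M_t := N_t - \int_0^t(\alpha+\beta r_s)\,ds$ for the compensated jump martingale and $B_t := \sigma\int_0^t\sqrt{r_s}\,dW_s$ for the Brownian martingale. Integrating the SDE~\eqref{dynamics} and rearranging gives
\begin{equation*}
r_t - r_0 = (bc+a\alpha)\,t - (b-a\beta)\int_0^t r_s\,ds + aM_t + B_t,
\end{equation*}
and therefore, setting $\mu := (bc+a\alpha)/(b-a\beta)$,
\begin{equation*}
\int_0^t r_s\,ds - \mu t \;=\; \frac{1}{b-a\beta}\bigl(r_0 - r_t + aM_t + B_t\bigr).
\end{equation*}
This identity is the engine behind both parts.

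For (i), divide by $\sqrt{t}$. The boundary term $(r_0-r_t)/\sqrt{t}$ vanishes in probability because Proposition~\ref{FirstandSecondMoments}(i) bounds $\mathbb{E}[r_t]$ uniformly in $t$. What remains is the martingale $(aM_t+B_t)/((b-a\beta)\sqrt{t})$. Since $W$ and $N$ are independent, the cross bracket $\langle M,B\rangle$ vanishes, and the predictable quadratic variation is
\begin{equation*}
\frac{1}{(b-a\beta)^2}\Bigl[a^2\!\int_0^t(\alpha+\beta r_s)\,ds + \sigma^2\!\int_0^t r_s\,ds\Bigr].
\end{equation*}
Dividing by $t$ and using Theorem~\ref{LLN}(i), this converges in $L^1$ to the claimed variance. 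The Lindeberg/jump condition is automatic since the jumps of $M_t/\sqrt{t}$ have uniformly bounded size $a/\sqrt{t}\to 0$, so the martingale CLT delivers the Gaussian limit.

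For (ii), use the compensator identity $N_t = \alpha t + \beta\int_0^t r_s\,ds + M_t$. Centering around $\nu t := b(\alpha+\beta c)t/(b-a\beta) = \alpha t + \beta\mu t$ and substituting the representation above gives, after $1+a\beta/(b-a\beta)=b/(b-a\beta)$,
\begin{equation*}
N_t - \nu t \;=\; \frac{b}{b-a\beta}M_t + \frac{\beta}{b-a\beta}(r_0 - r_t) + \frac{\beta}{b-a\beta}B_t.
\end{equation*}
Dividing by $\sqrt{t}$ and applying the same martingale CLT to this new linear combination of $M$ and $B$, the limiting variance is again determined by $\int_0^t(\alpha+\beta r_s)\,ds$ and $\int_0^t r_s\,ds$ and evaluated via Theorem~\ref{LLN}(i).

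The only nontrivial step is identifying the limiting predictable quadratic variation, which reduces cleanly to the $L^2$ law of large numbers already established. Independence of $N$ and $W$ kills the cross bracket, the bound on $\mathbb{E}[r_t]$ handles the boundary term $(r_0-r_t)/\sqrt{t}$, and the uniform jump size $a$ trivializes the Lindeberg condition; so the bulk of the work is the algebraic simplification of the variance expressions into the stated form.
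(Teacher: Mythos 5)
Your decomposition is in substance identical to the paper's: with $f(r)=r/(b-a\beta)$, the paper's martingale $f(r_t)-f(r_0)-\int_0^t\mathcal{A}f(r_s)\,ds$ is exactly your $(aM_t+B_t)/(b-a\beta)$, and the remaining steps (negligibility of $(r_0-r_t)/\sqrt{t}$, vanishing cross-bracket, law of large numbers for the bracket, bounded jumps for the Lindeberg condition, martingale CLT) coincide; the only cosmetic difference is that the paper uses the optional bracket $a^2N_t+\sigma^2\int_0^t r_s\,ds$ where you use the predictable one, which is immaterial after dividing by $t$. Your part (i) reproduces the stated variance exactly.

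For part (ii) there is a substantive issue, but it lies with the statement rather than with your argument. Your identity
\begin{equation*}
N_t-\nu t=\frac{b}{b-a\beta}M_t+\frac{\beta}{b-a\beta}B_t+\frac{\beta}{b-a\beta}(r_0-r_t),
\qquad \nu:=\frac{b(\alpha+\beta c)}{b-a\beta},
\end{equation*}
is correct, and it yields the limiting variance
\begin{equation*}
\frac{b^{2}}{(b-a\beta)^{2}}\cdot\frac{b(\alpha+\beta c)}{b-a\beta}
+\frac{\sigma^{2}\beta^{2}}{(b-a\beta)^{2}}\cdot\frac{bc+a\alpha}{b-a\beta}
=\frac{b^{3}(\alpha+\beta c)+\sigma^{2}\beta^{2}(bc+a\alpha)}{(b-a\beta)^{3}},
\end{equation*}
whose $\sigma^{2}$ term differs from the stated $4\sigma^{2}b^{2}(bc+a\alpha)/\bigl(a^{2}(b-a\beta)^{3}\bigr)$; the two coincide only if $a\beta=2b$, which is excluded by $b>a\beta$. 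The check $\beta=0$ (then $N$ is a rate-$\alpha$ Poisson process, so the limit variance must be $\alpha$ and cannot involve $\sigma$) confirms that your value is the correct one. The discrepancy originates in the paper's proof: solving the SDE for $N_t$ gives $N_t=\frac{r_t-r_0}{a}-\frac{bc}{a}t+\frac{b}{a}\int_0^t r_s\,ds-\frac{1}{a}B_t$, whereas the paper flips the signs of the two drift terms; consequently the two Brownian integrals, which should partially cancel to the coefficient $\frac{b}{a(b-a\beta)}-\frac{1}{a}=\frac{\beta}{b-a\beta}$, are instead added, and are then further miscombined into $\frac{2b}{a(b-a\beta)}$. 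So your proof establishes a corrected version of (ii), not the printed one; you should say so explicitly rather than asserting that the algebra ``simplifies into the stated form.'' Two minor nits: the jumps of size $a/\sqrt{t}$ belong to $aM_t/\sqrt{t}$, not $M_t/\sqrt{t}$ (whose jumps have size $1/\sqrt{t}$); and the claim $(r_0-r_t)/\sqrt{t}\to 0$ in probability deserves the one-line justification that $r_t\ge 0$ and $\mathbb{E}[r_t]$ is bounded uniformly in $t$, so Markov's inequality applies.
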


Before we proceed, recall that a sequence $(P_{n})_{n\in\mathbb{N}}$ of probability measures on a topological space $X$ 
satisfies the large deviation principle with rate function $I:X\rightarrow\mathbb{R}$ if $I$ is non-negative, 
lower semicontinuous and for any measurable set $A$, we have
\begin{equation}
-\inf_{x\in A^{o}}I(x)\leq\liminf_{n\rightarrow\infty}\frac{1}{n}\log P_{n}(A)
\leq\limsup_{n\rightarrow\infty}\frac{1}{n}\log P_{n}(A)\leq-\inf_{x\in\overline{A}}I(x).
\end{equation}
Here, $A^{o}$ is the interior of $A$ and $\overline{A}$ is its closure. 
We refer to Dembo and Zeitouni \cite{Dembo} and Varadhan \cite{VaradhanII} for general background of the theory and the applications
of large deviations.

\begin{theorem}[Large Deviation Principle]\label{LDP}
For any $r_{0}:=r\in\mathbb{R}^{+}$, 

(i)
$(\frac{1}{t}\int_{0}^{t}r_{s}ds\in\cdot)$ satisfies a large deviation principle
with rate function
\begin{equation}
I(x)=\sup_{\theta\leq\theta_{c}}\left\{\theta x-bcy(\theta)-\alpha(e^{ay(\theta)}-1)\right\},
\end{equation}
where for $\theta\leq\theta_{c}$, $y=y(\theta)$ is the smaller solution of 
\begin{equation}
-by+\frac{1}{2}\sigma^{2}y^{2}+\beta(e^{ay}-1)+\theta=0,
\end{equation}
and
\begin{equation}
\theta_{c}=by_{c}-\frac{1}{2}\sigma^{2}y_{c}^{2}-\beta(e^{ay_{c}}-1),
\end{equation}
where $y_{c}$ is the unique positive solution to the equation $b=\sigma^{2}y_{c}+\beta ae^{ay_{c}}$.

(ii) $(N_{t}/t\in\cdot)$ satisfies a large deviation principle
with rate function
\begin{equation}
I(x)=\sup_{\theta\leq\theta_{c}}\left\{\theta x-bcy(\theta)-\alpha(e^{ay(\theta)+\theta}-1)\right\},
\end{equation}
where for $\theta\leq\theta_{c}$, $y(\theta)$ is the smaller solution of
\begin{equation}
-by(\theta)+\frac{1}{2}\sigma^{2}y^{2}(\theta)+\beta(e^{ay(\theta)+\theta}-1)=0,
\end{equation}
and
\begin{equation}
\theta_{c}=\log\left(\frac{\sqrt{\sigma^{4}+a^{2}b^{2}+2a^{2}\sigma^{2}\beta}
-\sigma^{2}}{a^{2}\beta}\right)
-\frac{\sigma^{2}+ab-\sqrt{\sigma^{4}+a^{2}b^{2}+2a^{2}\sigma^{2}\beta}}{\sigma^{2}}.
\end{equation}
\end{theorem}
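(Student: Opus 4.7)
The plan is to apply the Gärtner--Ellis theorem to both parts, which reduces the problem to computing the scaled logarithmic moment generating functions
\[
\Lambda_{1}(\theta):=\lim_{t\to\infty}\tfrac{1}{t}\log\mathbb{E}[e^{\theta\int_{0}^{t}r_{s}\,ds}],
\qquad
\Lambda_{2}(\theta):=\lim_{t\to\infty}\tfrac{1}{t}\log\mathbb{E}[e^{\theta N_{t}}],
\]
verifying essential smoothness, and then reading off the rate function as the Legendre transform. The structural input making this tractable is that $r_{t}$ is affine, so both Laplace transforms admit exponential-affine expressions in the initial state, with time-dependent coefficients governed by a Riccati-type ODE system.

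For part (i), I would set $u(t,r):=\mathbb{E}[e^{\theta\int_{0}^{t}r_{s}\,ds}\mid r_{0}=r]$ and use the Feynman--Kac backward equation $\partial_{t}u=\mathcal{A}u+\theta r u$ with $u(0,r)=1$. The ansatz $u(t,r)=\exp(A(t)+B(t)r)$ reduces this to
\[
B'(t)=F(B(t))+\theta,\qquad A'(t)=bc\,B(t)+\alpha(e^{aB(t)}-1),\qquad A(0)=B(0)=0,
\]
where $F(y):=-by+\tfrac{1}{2}\sigma^{2}y^{2}+\beta(e^{ay}-1)$. The function $F$ is strictly convex, $F(0)=0$, $F'(0)=-b+a\beta<0$ under the hypothesis $b>a\beta$, so it has a unique minimizer $y_{c}>0$ defined by $b=\sigma^{2}y_{c}+a\beta e^{ay_{c}}$, and $\theta_{c}=-F(y_{c})$ is the critical threshold. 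For $\theta\leq\theta_{c}$ let $y(\theta)$ be the smaller root of $F(y)+\theta=0$; a sign analysis of the scalar ODE $B'=F(B)+\theta$ shows that $B(t)$ converges monotonically from $0$ to $y(\theta)$, which gives $A(t)/t\to bc\,y(\theta)+\alpha(e^{ay(\theta)}-1)$ and hence the desired expression for $\Lambda_{1}(\theta)$ on $(-\infty,\theta_{c}]$; for $\theta>\theta_{c}$ the flow $B(t)$ blows up in finite time and $\Lambda_{1}(\theta)=+\infty$.

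Part (ii) runs in exact parallel with $v(t,r):=\mathbb{E}[e^{\theta N_{t}}\mid r_{0}=r]$, except that the jump term of the backward equation acquires a twist: each jump of $N$ contributes a factor $e^{\theta}$, so the equation reads $\partial_{t}v=bc\partial_{r}v-br\partial_{r}v+\tfrac{1}{2}\sigma^{2}r\partial_{rr}v+(\alpha+\beta r)[e^{\theta}v(t,r+a)-v(t,r)]$. The same exponential-affine ansatz yields $B'=-bB+\tfrac{1}{2}\sigma^{2}B^{2}+\beta(e^{aB+\theta}-1)$ and $A'=bcB+\alpha(e^{aB+\theta}-1)$. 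The critical value $\theta_{c}$ is determined by the coincident-root condition $G(y,\theta_{c})=\partial_{y}G(y,\theta_{c})=0$ with $G(y,\theta):=-by+\tfrac{1}{2}\sigma^{2}y^{2}+\beta(e^{ay+\theta}-1)$; eliminating the exponential between $\sigma^{2}y+a\beta e^{ay+\theta_{c}}=b$ and $G=0$ produces a quadratic in $y$ with discriminant $\sigma^{4}+a^{2}b^{2}+2a^{2}\sigma^{2}\beta$, and routine algebra recovers the displayed closed form.

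What remains in both parts is to verify essential smoothness of $\Lambda_{i}$ so that Gärtner--Ellis actually yields a full LDP. Differentiability on the open domain comes from the implicit function theorem applied to $F(y(\theta))+\theta=0$, giving $y'(\theta)=-1/F'(y(\theta))$, well-defined since $F'(y(\theta))<0$ on $(-\infty,y_{c})$. Steepness at the right endpoint follows because $y(\theta)\to y_{c}$ as $\theta\to\theta_{c}^{-}$ and $F'(y_{c})=0$, forcing $y'(\theta)\to+\infty$ and hence $\Lambda'_{i}(\theta)\to+\infty$; exponential tightness, needed to upgrade the weak LDP on a half-line to a full one, is supplied by a Chebyshev bound at any interior $\theta$. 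The step I expect to demand the most care is this ODE analysis: one must identify the correct branch $y(\theta)$ of fixed point that the flow $B(t)$ selects and simultaneously verify $y'(\theta)\to\infty$ at the boundary. Both facts rest on the strict convexity of $F$ (resp.\ $G$) and the sign condition $b>a\beta$, without which the fixed-point structure---and with it the whole Gärtner--Ellis argument---would fail.
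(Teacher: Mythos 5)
Your proposal is correct and follows essentially the same route as the paper: Feynman--Kac with the exponential-affine ansatz reducing to the Riccati system, identification of the limit of the affine coefficient as the smaller root of $F(y)+\theta=0$ (resp.\ $G(y,\theta)=0$), the coincident-root computation of $\theta_{c}$, steepness from $y'(\theta)=-1/F'(y(\theta))\to+\infty$, and Gärtner--Ellis. If anything, your treatment of the scalar ODE (monotone convergence of the flow to the stable fixed point, finite-time blow-up for $\theta>\theta_{c}$) is spelled out more carefully than the paper's, which asserts this step without argument.
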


\begin{remark}
It is easy to see that when $c=0$ and $\sigma=0$, our results of Theorem \ref{LLN} (ii), Theorem \ref{CLT} (ii) and Theorem \ref{LDP} (ii)
are consistent with the law of large numbers and central limit theorem results for linear Hawkes process with exponential exciting
function as in Bacry et al. \cite{Bacry} and the large deviation principle as in Bordenave and Torrisi \cite{Bordenave}.
\end{remark}

\begin{proposition}\label{ergodiclemma}
Assume $b>a\beta$ and $2bc\geq\sigma^{2}$. Then, there exists a unique invariant probability measure for $r_{t}$. 
\end{proposition}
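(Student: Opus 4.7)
The plan is to obtain existence of an invariant measure via a Krylov-Bogolyubov argument fuelled by the first moment bound of Proposition \ref{FirstandSecondMoments}(i), and then uniqueness via an $L^{1}$-Wasserstein coupling that exploits the hypothesis $b>a\beta$.

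For existence, the first-moment formula \eqref{FirstMoment} together with $b>a\beta$ gives
\[
\mathbb{E}[r_t]\;\le\; r+\frac{bc+a\alpha}{b-a\beta}\qquad\text{uniformly in } t\ge 0.
\]
By Markov's inequality, $\{\mathcal{L}(r_t):t\ge 0\}$ is tight on $[0,\infty)$, and consequently so is the family of Cesaro averages $\mu_T:=\frac{1}{T}\int_0^T\mathcal{L}(r_s)\,ds$. The SDE \eqref{dynamics} has pathwise uniqueness by the Yamada-Watanabe theorem and its solution depends continuously on the initial condition, so the semigroup $P_t$ is Feller. A standard Krylov-Bogolyubov argument then shows that every weak subsequential limit of $(\mu_{T_n})$ along $T_n\to\infty$ satisfies $\pi P_t=\pi$ for all $t\ge 0$, providing an invariant probability measure.

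For uniqueness I would construct two copies $r^1,r^2$ of the dynamics on a common probability space driven by the same Brownian motion $W$, with jumps obtained by thinning a single Poisson random measure $\widetilde N(ds,du)$ on $[0,\infty)\times[0,\infty)$ of intensity $ds\,du$: $r^i$ jumps at $(s,u)$ iff $u\le\alpha+\beta r_{s-}^i$. On the event $r_{s-}^1\ge r_{s-}^2$, the strip $u\le\alpha+\beta r_{s-}^2$ produces simultaneous jumps of the two copies (leaving $|r^1-r^2|$ unchanged), while the annulus $\alpha+\beta r_{s-}^2<u\le\alpha+\beta r_{s-}^1$ produces a jump of $r^1$ alone, adding $a$ to $|r^1-r^2|$. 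Tanaka's formula applied to $|r_t^1-r_t^2|$, together with a Yamada-Watanabe type argument discarding the local time at $0$ of $r^1-r^2$ arising from the continuous part $\sigma(\sqrt{r^1}-\sqrt{r^2})\,dW$, gives after taking expectations the differential inequality
\[
\frac{d}{dt}\mathbb{E}\,|r_t^1-r_t^2|\;\le\;-(b-a\beta)\,\mathbb{E}\,|r_t^1-r_t^2|,
\]
and so $\mathbb{E}\,|r_t^1-r_t^2|\le e^{-(b-a\beta)t}\,\mathbb{E}\,|r_0^1-r_0^2|$. If $\pi_1,\pi_2$ are two invariant probability measures (both with finite first moment by the Lyapunov bound above), starting the coupled pair from $\pi_1\otimes\pi_2$ preserves the marginals $\pi_i$, whence $W_1(\pi_1,\pi_2)\le e^{-(b-a\beta)t}W_1(\pi_1,\pi_2)$ for every $t\ge 0$, forcing $\pi_1=\pi_2$.

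The main obstacle is the self-exciting nature of the Hawkes jumps: a larger value of $r$ triggers more jumps, which in the coupling tends to push the two copies further apart rather than bring them together, producing the destabilising contribution $+a\beta\,|r^1-r^2|\,dt$. The hypothesis $b>a\beta$ is precisely what is needed for the mean reversion $-b\,|r^1-r^2|\,dt$ to dominate this positive feedback, yielding the negative rate $-(b-a\beta)$ in the coupling estimate; without this inequality the contraction, and in fact the existence of a stationary law, would both fail.
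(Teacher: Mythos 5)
Your proof is correct, but the uniqueness half takes a genuinely different route from the paper's. For existence the two arguments rest on the same linear Lyapunov structure: the paper applies the criterion from Hairer's notes with $u(r)=r$, verifying $\mathcal{A}u\leq C_{1}-C_{2}u$ for $0<C_{2}<b-a\beta$, which is exactly the generator-level form of your uniform first-moment bound feeding Krylov--Bogolyubov; these are essentially interchangeable, and your coupling estimate has the side benefit of supplying the weak continuity in the initial condition that Krylov--Bogolyubov requires. For uniqueness the paper argues qualitatively: for any $x,y>0$ it shows $\mathcal{P}^{x}(T,\cdot)$ and $\mathcal{P}^{y}(T,\cdot)$ are not mutually singular, by restricting to the positive-probability event that neither copy jumps on $(0,T)$, on which both laws are absolutely continuous with respect to Lebesgue measure on $\mathbb{R}^{+}$; together with the Lyapunov bound this yields uniqueness. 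Your synchronous coupling (common Brownian motion, jumps by thinning a common Poisson random measure) instead delivers a quantitative $W_{1}$-contraction at rate $b-a\beta$, which is strictly more information --- exponential ergodicity in Wasserstein distance, not merely uniqueness --- at the cost of more delicate bookkeeping: you must check that the uncompensated jumps always contribute $+a\beta\,\mathbb{E}|r^{1}_{t}-r^{2}_{t}|$ regardless of which copy is larger (they do, since the larger copy receives the extra jumps), that the local time of $r^{1}-r^{2}$ at the origin vanishes (the Yamada--Watanabe $1/2$-H\"{o}lder argument, which goes through in the presence of the finitely many jumps of fixed size $a$), and that invariant measures have finite first moment so that the quantity being contracted is finite. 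One cosmetic slip: starting the pair from $\pi_{1}\otimes\pi_{2}$ gives $W_{1}(\pi_{1},\pi_{2})\leq e^{-(b-a\beta)t}\iint|x-y|\,\pi_{1}(dx)\,\pi_{2}(dy)$ rather than the self-referential inequality you display; either version suffices since the right-hand side tends to $0$ as $t\rightarrow\infty$. Both proofs are complete; yours is longer but produces a convergence rate, while the paper's is softer and shorter.
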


\begin{proposition}\label{Laplaceofr}
For any $\theta>0$, the Laplace transform of $r_{t}$ satisfies $\mathbb{E}[e^{-r_{t}}|r_{0}=r]
=e^{A(t)r+B(t)}$, where $A(t),B(t)$ satisfy the ordinary differential equations
\begin{equation}
\begin{cases}
A'(t)=-bA(t)+\frac{1}{2}\sigma^{2}A(t)^{2}+\beta(e^{aA(t)}-1),
\\
B'(t)=bcA(t)+\alpha(e^{aA(t)}-1),
\\
A(0)=-\theta, B(0)=0.
\end{cases}
\end{equation}
In particular, $\mathbb{E}[e^{-\theta r_{\infty}}]=e^{\int_{0}^{\infty}bcA(t)+\alpha(e^{aA(t)}-1)dt}$.
\end{proposition}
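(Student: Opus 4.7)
The strategy is the standard affine-process argument: exploit the linearity in $r$ of the generator $\mathcal{A}$ when acting on an exponential ansatz $u(t,r)=e^{A(t)r+B(t)}$. Fix $T>0$ and consider
\[
M_{t}:=e^{A(T-t)r_{t}+B(T-t)},\qquad 0\leq t\leq T.
\]
I would verify via It\^{o}'s formula that $M_{t}$ is a bounded (hence true) martingale exactly when $A,B$ satisfy the stated ODE system, and then read off the Laplace transform from $\mathbb{E}[M_{T}\mid r_{0}=r]=M_{0}$, which gives $\mathbb{E}[e^{-\theta r_{T}}\mid r_{0}=r]=e^{A(T)r+B(T)}$.

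\textbf{Derivation of the ODEs.} Apply It\^{o}'s formula with jumps to $M_{t}$. The continuous drift and quadratic variation of $r_{t}$ contribute $A(T-t)(bc-br_{t})+\tfrac12\sigma^{2}A(T-t)^{2}r_{t}$; the jump of size $a$ at intensity $\alpha+\beta r_{t}$ contributes, through its compensator, $(e^{aA(T-t)}-1)(\alpha+\beta r_{t})$; and differentiating the argument $T-t$ contributes $-A'(T-t)r_{t}-B'(T-t)$. Forcing the total drift of $M_{t}$ to vanish and matching the coefficient of $r_{t}$ and the constant term yields exactly the two ODEs in the statement, while the terminal identification $M_{T}=e^{-\theta r_{T}}$ imposes $A(0)=-\theta$, $B(0)=0$.

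\textbf{Global existence and the stationary limit.} To upgrade the (local) martingale above to a true martingale, I would show that the Riccati-type ODE for $A$ has a global solution trapped in $[-\theta,0]$. Writing $F(A):=-bA+\tfrac12\sigma^{2}A^{2}+\beta(e^{aA}-1)$, one checks $F(0)=0$, $F'(0)=-b+a\beta<0$ by hypothesis, and $F(A)\to+\infty$ as $A\to-\infty$; hence $0$ is asymptotically stable from below and the solution starting at $-\theta$ is monotone increasing, confined to $(-\theta,0)$, and converges to $0$ at exponential rate $b-a\beta$. This forces $A(T-t)r_{t}\leq 0$ and $B(T-t)$ uniformly bounded on $[0,T]$, so $|M_{t}|\leq e^{\sup_{[0,T]}B}$ and the martingale property follows. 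For the stationary Laplace transform I would let $t\to\infty$, using Proposition \ref{ergodiclemma} together with the exponential decay of $A(s)$, which makes $bcA(s)+\alpha(e^{aA(s)}-1)$ absolutely integrable on $[0,\infty)$; thus $B(\infty)$ is finite and $\mathbb{E}[e^{-\theta r_{t}}\mid r_{0}=r]\to e^{B(\infty)}$. The main obstacle is precisely this qualitative analysis of the Riccati equation: everything else is a mechanical application of It\^{o}'s formula, but without the global bound $A(t)\in[-\theta,0]$ one cannot close either the martingale argument or the $t\to\infty$ limit.
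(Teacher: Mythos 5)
Your proposal is correct and follows essentially the same route as the paper: the paper simply writes the Kolmogorov forward equation for $u(t,r)=\mathbb{E}[e^{-\theta r_{t}}|r_{0}=r]$ and substitutes the exponential-affine ansatz $u=e^{A(t)r+B(t)}$, which is exactly the coefficient-matching you perform via the martingale $M_{t}=e^{A(T-t)r_{t}+B(T-t)}$. Your additional qualitative analysis of the Riccati equation (showing $A(t)\in[-\theta,0]$, exponential convergence to $0$, and hence integrability of $B'$) supplies the justification for the true-martingale property and the $t\to\infty$ limit that the paper leaves implicit.
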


We can use $r_{t}$ as a stochastic model for short rate term structure.
We are interested to value a default-free discount bond paying one unit at time $T$, i.e.
\begin{equation}
P(t,T,r):=\mathbb{E}\left[e^{-\int_{t}^{T}r_{s}ds}\big|r_{t}=r\right].
\end{equation}

\begin{proposition}\label{shortrate}
(i) $P(t,T,r)=e^{A(t)r+b(t)}$, where $A(t),B(t)$ satisfy the following ordinary differential equations,
\begin{equation}
\begin{cases}
A'(t)-bA(t)+\frac{1}{2}\sigma^{2}A(t)^{2}+\beta(e^{aA(t)}-1)-1=0,
\\
B'(t)+bcA(t)+\alpha(e^{aA(t)}-1)=0,
\\
A(T)=B(T)=0.
\end{cases}
\end{equation}

(ii) We have the following asymptotic result,
\begin{equation}
\lim_{T\rightarrow\infty}\frac{1}{T}\log P(t,T,r)=bcx_{\ast}+\alpha(e^{ax_{\ast}}-1),
\end{equation}
where $x_{\ast}$ is the unique negative solution to the following equation,
\begin{equation}
-bx+\frac{1}{2}\sigma^{2}x^{2}+\beta(e^{ax}-1)-1=0.
\end{equation}
\end{proposition}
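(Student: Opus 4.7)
Part (i) is a Feynman--Kac / affine-ansatz calculation, and part (ii) is an ODE asymptotics argument based on a fixed-point analysis of the Riccati-type equation for $A$.

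\textbf{Part (i).} The plan is to verify the exponential-affine ansatz directly via the martingale problem associated with the generator $\mathcal{A}$. For a smooth function $u(t,r)$, the process
\begin{equation*}
M_s := e^{-\int_t^s r_u\,du}\, u(s,r_s), \qquad s\in[t,T],
\end{equation*}
has It\^o--Meyer differential
\begin{equation*}
dM_s = e^{-\int_t^s r_u\,du}\bigl[(\partial_s u + \mathcal{A}u - r u)(s,r_s)\,ds + \text{martingale terms}\bigr].
\end{equation*}
Substituting the ansatz $u(s,r) = e^{A(s)r+B(s)}$ and using $u(s,r+a)-u(s,r) = (e^{aA(s)}-1)u(s,r)$, $u_r=Au$, $u_{rr}=A^2u$, the drift term factors as
\begin{equation*}
u\left\{\bigl[A'(s)-bA(s)+\tfrac12\sigma^2 A(s)^2+\beta(e^{aA(s)}-1)-1\bigr]r + \bigl[B'(s)+bcA(s)+\alpha(e^{aA(s)}-1)\bigr]\right\}.
\end{equation*}
Choosing $A,B$ to solve the two ODEs kills both brackets identically in $r$, so $M$ is a local martingale; boundedness of $u$ on compact intervals (which follows once one verifies $A(t)$ stays in a compact set, as is standard for affine processes) promotes it to a true martingale. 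The terminal condition $A(T)=B(T)=0$ gives $u(T,r_T)=1$, so $\mathbb{E}[M_T\mid r_t=r] = \mathbb{E}[e^{-\int_t^T r_u\,du}\mid r_t=r] = P(t,T,r)$, while $M_t = e^{A(t)r+B(t)}$. Equating these yields (i).

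\textbf{Part (ii).} The plan is to pass to the time-to-maturity variable $\tau = T-t$ and set $\widetilde{A}(\tau) := A(T-\tau)$, $\widetilde{B}(\tau) := B(T-\tau)$. Then $\widetilde{A}$ satisfies the autonomous ODE
\begin{equation*}
\widetilde{A}'(\tau) = g(\widetilde{A}(\tau)), \qquad g(x):= -bx+\tfrac12\sigma^2 x^2+\beta(e^{ax}-1)-1,
\end{equation*}
with $\widetilde{A}(0)=0$, and $\widetilde{B}'(\tau)=bc\widetilde{A}(\tau)+\alpha(e^{a\widetilde{A}(\tau)}-1)$, $\widetilde{B}(0)=0$. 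I will analyze $g$: it is strictly convex ($g''(x)=\sigma^2+\beta a^2 e^{ax}>0$), with $g(0)=-1<0$ and $g'(0)=-(b-a\beta)<0$ under the standing assumption $b>a\beta$, and $g(x)\to+\infty$ as $x\to\pm\infty$. Hence $g$ has exactly two real roots, one of which is negative; call it $x_*$. Since $\widetilde{A}'(0)=-1<0$, the solution $\widetilde{A}(\tau)$ decreases, and it cannot cross $x_*$ (where the vector field vanishes), so $\widetilde{A}(\tau)\downarrow x_*$ monotonically as $\tau\to\infty$.

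With that limit in hand, $\widetilde{B}'(\tau)\to bcx_*+\alpha(e^{ax_*}-1)$, so by L'H\^opital (or Ces\`aro), $\widetilde{B}(\tau)/\tau\to bcx_*+\alpha(e^{ax_*}-1)$. Finally, writing $\frac{1}{T}\log P(t,T,r) = \frac{\widetilde{A}(T-t)\,r + \widetilde{B}(T-t)}{T}$ and noting that $\widetilde{A}(T-t)$ is bounded and $(T-t)/T\to 1$, I get the stated limit $bcx_*+\alpha(e^{ax_*}-1)$.

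\textbf{Main obstacle.} The Feynman--Kac step in (i) is routine modulo checking that the local martingale is genuine; the real work is the qualitative analysis of the Riccati-type ODE $\widetilde{A}'=g(\widetilde{A})$ in (ii), where one must (a) locate the unique negative root $x_*$ using strict convexity of $g$ together with $b>a\beta$, and (b) argue trapping of the orbit $\widetilde{A}(\tau)$ between $x_*$ and $0$ so that monotone convergence to the fixed point is guaranteed. Once this is done, the short-rate asymptotics follow immediately.
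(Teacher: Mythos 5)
Your proposal is correct and follows essentially the same route as the paper: part (i) is the Feynman--Kac/affine ansatz computation (your martingale verification is just the standard proof of Feynman--Kac written out), and part (ii) is the fixed-point analysis of the Riccati-type ODE, which the paper disposes of by referring back to the argument in the proof of Theorem \ref{LDP}. Your explicit phase-line analysis (convexity of $g$, $g(0)=-1<0$, trapping of the orbit in $[x_{\ast},0]$) supplies the details the paper leaves implicit, and your observation that $\widetilde{A}\leq 0$ makes $u$ bounded would close the local-versus-true martingale point cleanly.
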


\begin{remark}
A natural way to generalize the Cox-Ingersoll-Ross process with Hawkes jumps is to allow the jump size to be random, i.e.
\begin{equation}
dr_{t}=b(c-r_{t})dt+\sigma\sqrt{r_{t}}dW_{t}+dJ_{t},
\end{equation}
where $J_{t}=\sum_{i=1}^{N_{t-}}a_{i}$,
and $a_{i}$ are i.i.d. positive random variables, independent of the past history and follows a probability distribution $Q(da)$. 
$N_{t}$ is a simple point process with intensity $\lambda_{t}=\alpha+\beta r_{t}$ at time $t>0$.
We assume that $a,b,c,\alpha,\beta,\sigma>0$, $b>\int_{\mathbb{R}^{+}}aQ(da)\beta$,
and $2bc\geq\sigma^{2}$.

We can write down the generator as
\begin{equation}
\mathcal{A}f(r)=bc\frac{\partial f}{\partial r}-br\frac{\partial f}{\partial r}+\frac{1}{2}\sigma^{2}r\frac{\partial^{2}f}{\partial r^{2}}
+(\alpha+\beta r)\int_{\mathbb{R}^{+}}[f(r+a)-f(r)]Q(da).
\end{equation}
All the results in this paper can be generalized to this model after a careful analysis.
\end{remark}

\begin{remark}
Another possibility to generalize the Cox-Ingersoll-Ross process with Hawkes jumps is to allow the jumps to follow a nonlinear
Hawkes process, i.e. $r_{t}$ satisfies the dynamics \eqref{dynamics}
and $N_{t}$ is a simple point process with intensity $\lambda(r_{t})$, where $\lambda(\cdot):\mathbb{R}^{+}\rightarrow\mathbb{R}^{+}$
is in general a nonlinear function. This can be considered as a generalization to the classical nonlinear Hawkes process
with exponential exciting function. Because of the nonlinearity, we will not be able to get a closed expression
in the limit for the limit theorems or a set of ordinary differential equations which are related to the Laplace transform
of the process.
\end{remark}

\section{Proofs}\label{PFs}

\begin{proof}[Proof of Proposition \ref{FirstandSecondMoments}]
(i) Taking expectations on both sides of \eqref{dynamics}, we have
\begin{equation}
d\mathbb{E}[r_{t}]=b(c-\mathbb{E}[r_{t}])dt+a(\alpha+\beta\mathbb{E}[r_{t}])dt,
\end{equation}
which implies that for any $s>t$, we have the following conditional expectation,
\begin{equation}
\mathbb{E}[r_{s}|r_{t}]
=\frac{bc+a\alpha}{b-a\beta}-e^{-(b-a\beta)(s-t)}\left[\frac{bc+a\alpha}{b-a\beta}-r_{t}\right].
\end{equation}

(ii) By It\^{o}'s formula, we have
\begin{equation}
d(r_{t}^{2})=2r_{t}[b(c-r_{t})dt+\sigma\sqrt{r_{t}}dW_{t}]
+\sigma^{2}r_{t}dt+2r_{t}adN_{t}+a^{2}dN_{t}.
\end{equation}
Taking expectations on both sides, we get
\begin{equation}
\frac{d\mathbb{E}[r_{t}^{2}]}{dt}=2bc\mathbb{E}[r_{t}]-2b\mathbb{E}[r_{t}^{2}]+\sigma^{2}\mathbb{E}[r_{t}]
+2a(\alpha\mathbb{E}[r_{t}]+\beta\mathbb{E}[r_{t}^{2}])+a^{2}\alpha+a^{2}\beta\mathbb{E}[r_{t}].
\end{equation}
This implies that
\begin{align}
&\mathbb{E}[r_{s}^{2}|r_{t}]e^{2(b-a\beta)s}-r_{t}^{2}e^{2(b-a\beta)t}
\\
&=(2bc+\sigma^{2}+2a\alpha+a^{2}\beta)\int_{t}^{s}e^{2(b-a\beta)u}\mathbb{E}[r_{u}|r_{t}]du
+a^{2}\alpha\int_{t}^{s}e^{2(b-a\beta)u}du\nonumber
\\
&=\left[(2bc+\sigma^{2}+2a\alpha+a^{2}\beta)\frac{bc+a\alpha}{2(b-a\beta)^{2}}+\frac{a^{2}\alpha}{2(b-a\beta)}\right]
[e^{2(b-a\beta)s}-e^{2(b-a\beta)t}]\nonumber
\\
&\qquad\qquad
-(2bc+\sigma^{2}+2a\alpha+a^{2}\beta)\frac{bc+a\alpha}{b-a\beta}\frac{e^{(b-a\beta)t}}{(b-a\beta)}[e^{(b-a\beta)s}-e^{(b-a\beta)t}]\nonumber
\\
&\qquad\qquad\qquad\qquad
+(2bc+\sigma^{2}+2a\alpha+a^{2}\beta)r_{t}\frac{e^{(b-a\beta)t}}{(b-a\beta)}[e^{(b-a\beta)s}-e^{(b-a\beta)t}],\nonumber
\end{align}
which yields \eqref{SecondMoment}.
\end{proof}

\begin{proof}[Proof of Theorem \ref{LLN}]
(i)
To prove the convergence in the $L^{2}(\mathbb{P})$ norm, we need to show that
\begin{align}
&\mathbb{E}\left(\frac{1}{t}\int_{0}^{t}r_{s}ds-\frac{bc+a\alpha}{b-a\beta}\right)^{2}
\\
&=\frac{1}{t^{2}}\mathbb{E}\left(\int_{0}^{t}r_{s}ds\right)^{2}-\frac{2}{t}\int_{0}^{t}\mathbb{E}[r_{s}]ds\cdot\frac{bc+a\alpha}{b-a\beta}
+\left(\frac{bc+a\alpha}{b-a\beta}\right)^{2}\rightarrow 0,\nonumber
\end{align}
as $t\rightarrow\infty$. From \eqref{FirstMoment} of Proposition \ref{FirstandSecondMoments}, 
it is clear that $\frac{1}{t}\int_{0}^{t}\mathbb{E}[r_{s}]ds\rightarrow
\frac{bc+a\alpha}{b-a\beta}$ as $t\rightarrow\infty$. Therefore, it suffices to show that $\frac{1}{t^{2}}\mathbb{E}\left(\int_{0}^{t}r_{s}ds\right)^{2}
\rightarrow\frac{bc+a\alpha}{b-a\beta}$ as $t\rightarrow\infty$. Applying \eqref{FirstMoment} of Proposition \ref{FirstandSecondMoments},
we get
\begin{align}
&\frac{1}{t^{2}}\mathbb{E}\left(\int_{0}^{t}r_{s}ds\right)^{2}
\\
&=\frac{2}{t^{2}}\iint_{0<s_{1}<s_{2}<t}\mathbb{E}[r_{s_{1}}\mathbb{E}[r_{s_{2}}|r_{s_{1}}]]ds_{1}ds_{2}\nonumber
\\
&=\frac{2}{t^{2}}\iint_{0<s_{1}<s_{2}<t}\frac{bc+a\alpha}{b-a\beta}\mathbb{E}[r_{s_{1}}]ds_{1}ds_{2}\nonumber
\\
&\qquad\qquad
-\frac{2}{t^{2}}\iint_{0<s_{1}<s_{2}<t}e^{-(b-a\beta)(s_{2}-s_{1})}\left[\frac{bc+a\alpha}{b-a\beta}\mathbb{E}[r_{s_{1}}]
-\mathbb{E}[r_{s_{1}}^{2}]\right]ds_{1}ds_{2}.\nonumber
\end{align}
From Proposition \ref{FirstandSecondMoments}, given $r_{0}=r$, $\mathbb{E}[r_{s_{1}}]$ and $\mathbb{E}[r_{s_{1}}^{2}]$
are uniformly bounded by some universal constant only depending on $r$, say $M(r)$. Therefore,
\begin{align}
&\left|\frac{2}{t^{2}}\iint_{0<s_{1}<s_{2}<t}e^{-(b-a\beta)(s_{2}-s_{1})}\left[\frac{bc+a\alpha}{b-a\beta}\mathbb{E}[r_{s_{1}}]
-\mathbb{E}[r_{s_{1}}^{2}]\right]ds_{1}ds_{2}\right|
\\
&\leq\frac{2}{t^{2}}M(r)\left[\frac{bc+a\alpha}{b-a\beta}+1\right]
\iint_{0<s_{1}<s_{2}<t}e^{-(b-a\beta)(s_{2}-s_{1})}ds_{1}ds_{2}\rightarrow 0,\nonumber
\end{align}
as $t\rightarrow\infty$. Again by \eqref{FirstMoment}, it is easy to check that
\begin{equation}
\frac{2}{t^{2}}\iint_{0<s_{1}<s_{2}<t}\frac{bc+a\alpha}{b-a\beta}\mathbb{E}[r_{s_{1}}]ds_{1}ds_{2}
\rightarrow\left(\frac{bc+a\alpha}{b-a\beta}\right)^{2},
\end{equation}
as $t\rightarrow\infty$. Hence, we proved the law of large numbers.

(ii) 
Observe that
$N_{t}-\int_{0}^{t}\lambda_{s}ds
=N_{t}-\alpha t-\beta\int_{0}^{t}r_{s}ds$
is a martinagle and
\begin{equation}
\mathbb{E}\left[\left(\frac{N_{t}-\int_{0}^{t}\lambda_{s}}{t}\right)^{2}\right]
=\frac{1}{t^{2}}\mathbb{E}\left[\int_{0}^{t}\lambda_{s}ds\right]
=\frac{\alpha}{t}+\frac{\beta}{t^{2}}\int_{0}^{t}\mathbb{E}[r_{s}]ds\rightarrow 0,
\end{equation}
as $t\rightarrow\infty$ by Proposition \ref{FirstandSecondMoments}. Therefore, we have
\begin{equation}
\frac{N_{t}}{t}-\alpha-\frac{\beta}{t}\int_{0}^{t}r_{s}ds\rightarrow 0,
\end{equation}
in $L^{2}(\mathbb{P})$ as $t\rightarrow\infty$ and the conclusion follows from (i).
\end{proof}

\begin{remark}
The $L^{2}$ convergence in Theorem \ref{LLN} implies the convergence in probability.
Indeed, the convergence in Theorem \ref{LLN} also holds almost surely by using Proposition \ref{ergodiclemma} and ergodic theorem.
For example, by ergodic theorem, $\frac{1}{t}\int_{0}^{t}r_{s}ds\rightarrow\mathbb{E}[r_{\infty}]$ almost surely as $t\rightarrow\infty$.
Let $\pi$ be the unique invariant probability measure of $r_{t}$, then, we have $\int\mathcal{A}f(r)\pi(dr)=0$ for any smooth function $f$.
Consider $f(r)=r$, we have $\int(bc-br+(\alpha+\beta r)a)\pi(dr)=0$ which implies that $\mathbb{E}[r_{\infty}]=\frac{bc+a\alpha}{b-a\beta}$.
Similarly, we can show that $\frac{N_{t}}{t}\rightarrow\frac{b(\alpha+\beta c)}{b-a\beta}$ as $t\rightarrow\infty$ almost surely.
Indeed, the a.s. convergence also follows by applying the large deviation principle and the Borel-Cantelli 
lemma. The limit can be identified as the unique zero of the corresponding rate function for the large deviations.
\end{remark}

\begin{proof}[Proof of Theorem \ref{CLT}]
(i)
Observe that $f(r_{t})-f(r_{0})-\int_{0}^{t}\mathcal{A}f(r_{s})ds$ is a martingale for $f(r)=Kr$, where 
$K$ is a constant to be determined. Let $f(r)=Kr$, then
\begin{equation}
\mathcal{A}f(r)=K\left[(a\beta-b)r+(\alpha a+bc)\right].
\end{equation}
Let us choose $K=\frac{1}{b-a\beta}>0$. Then, we have
\begin{equation}
\int_{0}^{t}r_{s}ds-\frac{bc+a\alpha}{b-a\beta}t=\left[f(r_{t})-f(r_{0})-\int_{0}^{t}\mathcal{A}f(r_{s})ds\right]-f(r_{t})+f(r_{0}).
\end{equation}
Since $f(r_{0})=Kr_{0}$ is fixed, $\frac{f(r_{0})}{\sqrt{t}}\rightarrow 0$ as $t\rightarrow\infty$.
Also, we have
\begin{equation}
\frac{\mathbb{E}[f(r_{t})]}{\sqrt{t}}=\frac{K\mathbb{E}[r_{t}]}{\sqrt{t}}
=\frac{K}{\sqrt{t}}\left\{\frac{bc+a\alpha}{b-a\beta}-e^{-(b-a\beta)t}\left[\frac{bc+a\alpha}{b-a\beta}-r_{0}\right]\right\}\rightarrow 0,
\end{equation}
as $t\rightarrow\infty$ by Proposition \ref{FirstandSecondMoments}. Therefore, $\frac{f(r_{t})}{\sqrt{t}}\rightarrow 0$
as $t\rightarrow\infty$ in probability. The quadratic variation of the martingale $f(r_{t})-f(r_{0})-\int_{0}^{t}\mathcal{A}f(r_{s})ds$
is the same as the quadratic variation of $f(r_{t})=\frac{1}{b-a\beta}r_{t}$, which is 
the same as the quadratic variation of $\frac{1}{b-a\beta}(aN_{t}+\int_{0}^{t}\sigma\sqrt{r_{s}}dW_{s})$, which is
$\frac{1}{(b-a\beta)^{2}}\left[a^{2}N_{t}+\sigma^{2}\int_{0}^{t}r_{s}ds\right]$.
By the law of large numbers in Theorem \ref{LLN}, we have
\begin{equation}
\frac{1}{t}\frac{1}{(b-a\beta)^{2}}\left[a^{2}N_{t}+\sigma^{2}\int_{0}^{t}r_{s}ds\right]
\rightarrow\frac{1}{(b-a\beta)^{2}}\left[a^{2}\alpha+\frac{(a^{2}\beta+\sigma^{2})(bc+a\alpha)}{b-a\beta}\right],
\end{equation}
as $t\rightarrow\infty$. Hence, by the usual central limit theorem for martingales, we conclude that
\begin{equation}
\frac{\int_{0}^{t}r_{s}ds-\frac{bc+a\alpha}{b-a\beta}t}{\sqrt{t}}
\rightarrow N\left(0,\frac{a^{2}\alpha(b-a\beta)+(a^{2}\beta+\sigma^{2})(bc+a\alpha)}{(b-a\beta)^{3}}\right),
\end{equation}
in distribution as $t\rightarrow\infty$.

(ii) From \eqref{dynamics}, we have
$N_{t}=\frac{r_{t}}{a}-\frac{r_{0}}{a}+\frac{bc}{a}t
-\frac{b}{a}\int_{0}^{t}r_{s}ds-\frac{\sigma}{a}\int_{0}^{t}\sqrt{r_{s}}dW_{s}$,
which implies that
\begin{align}
N_{t}-\frac{b(\alpha+\beta c)}{b-a\beta}
&=\frac{r_{t}}{a}-\frac{r_{0}}{a}-\frac{b}{a}\int_{0}^{t}\left(r_{s}-\frac{bc+a\alpha}{b-a\beta}\right)ds
-\frac{\sigma}{a}\int_{0}^{t}\sqrt{r_{s}}dW_{s}
\\
&=\frac{r_{t}}{a}-\frac{r_{0}}{a}-f(r_{t})+f(r_{0})\nonumber
\\
&\qquad\qquad\qquad
+\left[f(r_{t})-f(r_{0})-\int_{0}^{t}\mathcal{A}f(r_{s})ds\right]
-\frac{\sigma}{a}\int_{0}^{t}\sqrt{r_{s}}dW_{s},\nonumber
\end{align}
where $f(r)=-\frac{b}{a(b-a\beta)}$ and we know that
$\frac{1}{\sqrt{t}}\left[\frac{r_{t}}{a}-\frac{r_{0}}{a}-f(r_{t})+f(r_{0})\right]\rightarrow 0$ as $t\rightarrow\infty$
in probability by the arguments as in (i). Now, 
\begin{equation}
\left[f(r_{t})-f(r_{0})-\int_{0}^{t}\mathcal{A}f(r_{s})ds\right]
-\frac{\sigma}{a}\int_{0}^{t}\sqrt{r_{s}}dW_{s}
\end{equation}
is a martingale and it has the same quadratic variation as
\begin{equation}
-\frac{b}{b-a\beta}N_{t}-\frac{b\sigma}{a(b-a\beta)}\int_{0}^{t}\sqrt{r_{s}}dW_{s}-\frac{\sigma}{a}\int_{0}^{t}\sqrt{r_{s}}dW_{s},
\end{equation}
which has quadratic variation
$\frac{b^{2}}{(b-a\beta)^{2}}N_{t}+\frac{4\sigma^{2}b^{2}}{a^{2}(b-a\beta)^{2}}\int_{0}^{t}r_{s}ds$.
By law of large numbers, i.e. Theorem \ref{LLN}, we have
\begin{align}
&\frac{1}{t}\left[\frac{b^{2}}{(b-a\beta)^{2}}N_{t}+\frac{4\sigma^{2}b^{2}}{a^{2}(b-a\beta)^{2}}\int_{0}^{t}r_{s}ds\right]
\\
&\rightarrow\frac{b^{2}}{(b-a\beta)^{2}}\frac{b(a\alpha+\beta c)}{b-a\beta}+\frac{4\sigma^{2}b^{2}}{a^{2}(b-a\beta)^{2}}
\frac{bc+a\alpha}{b-a\beta},\nonumber
\end{align}
as $t\rightarrow\infty$. Hence, by the usual central limit theorem for martingales, we conclude that
\begin{equation}
\frac{N_{t}-\frac{b(\alpha+\beta c)}{b-a\beta}t}{\sqrt{t}}
\rightarrow N\left(0,\frac{b^{3}a^{2}(\alpha+\beta c)+4\sigma^{2}b^{2}(bc+a\alpha)}{a^{2}(b-a\beta)^{3}}\right),
\end{equation}
in distribution as $t\rightarrow\infty$.
\end{proof}

\begin{proof}[Proof of Theorem \ref{LDP}]
(i) Let $u(\theta,t,r):=\mathbb{E}[e^{\theta\int_{0}^{t}r_{s}ds}]$. Then, by Feynman-Kac formula, we have
\begin{equation}
\begin{cases}
\frac{\partial u}{\partial t}=bc\frac{\partial u}{\partial r}
-br\frac{\partial u}{\partial r}
+\frac{1}{2}\sigma^{2}r\frac{\partial^{2}u}{\partial r^{2}}
+(\alpha+\beta r)[u(\theta,t,r+a)-u(\theta,t,r)]+\theta ru=0,
\\
u(\theta,0,r)=1.
\end{cases}
\end{equation}
Let us try $u(\theta,t,r)=e^{A(t)r+B(t)}$, then $A(t)$ and $B(t)$ satisfy the following ordinary differential equations,
\begin{equation}
\begin{cases}
A'(t)=-bA(t)+\frac{1}{2}\sigma^{2}A(t)^{2}+\beta(e^{aA(t)}-1)+\theta,
\\
B'(t)=bcA(t)+\alpha(e^{aA(t)}-1),
\\
A(0)=B(0)=0.
\end{cases}
\end{equation}
It is easy to see that $\lim_{t\rightarrow\infty}A(t)=y$ where $y$ satisfies the equation
\begin{equation}\label{ythetaI}
-by+\frac{1}{2}\sigma^{2}y^{2}+\beta(e^{ay}-1)+\theta=0,
\end{equation}
if the equation has a solution and $\lim_{t\rightarrow\infty}A(t)=+\infty$ otherwise.

We claim that $y(\theta)$ is the smaller solution of the equation \eqref{ythetaI} for $\theta\leq\theta_{c}$, where
\begin{align}
\theta_{c}&=\max_{y\in\mathbb{R}^{+}}\left\{by-\frac{1}{2}\sigma^{2}y^{2}-\beta(e^{ay}-1)\right\}\label{thetacI}
\\
&=by_{c}-\frac{1}{2}\sigma^{2}y_{c}^{2}-\beta(e^{ay_{c}}-1),\nonumber
\end{align}
where $y_{c}$ is the unique positive solution to the equation $b=\sigma^{2}y_{c}+\beta ae^{ay_{c}}$.
This equation has a unique positive solution since $b>a\beta$.

Let us give more explanations here. The function $F(y):=-by+\frac{1}{2}\sigma^{2}y^{2}+\beta(e^{ay}-1)+\theta$
is convex and have two distinct solutions of $F(y)=0$ when $\theta<\theta_{c}$ and has a unique positive solution
when $\theta=\theta_{c}$. When $\theta<0$, $y(\theta)$ is the unique negative solution of $F(y)=0$ and when $0\leq\theta\leq\theta_{c}$,
$y(\theta)$ is the smaller non-negative solution of $F(y)=0$.

Hence, we have
\begin{equation}
\Gamma(\theta):=\lim_{t\rightarrow\infty}\frac{1}{t}\log u(\theta,t,r)=
\begin{cases}
bcy(\theta)+\alpha(e^{ay(\theta)}-1) &\text{if $\theta\leq\theta_{c}$}
\\
+\infty &\text{otherwise}
\end{cases}.
\end{equation}
Since $b>a\beta$, for $y$ being positive and sufficiently small in \eqref{thetacI},
we have $by-\frac{1}{2}\sigma^{2}y^{2}-\beta(e^{ay}-1)\sim by-\beta ay>0$ and thus $\theta_{c}>0$. 
Also $\Gamma(\theta)$ is differentiable for $\theta<\theta_{c}$ and
differentiating with respect to $\theta$ to \eqref{ythetaI}, we get
\begin{equation}
\frac{\partial y}{\partial\theta}=\frac{1}{b-\sigma^{2}y-\beta ae^{ay}}\rightarrow+\infty,
\end{equation}
as $\theta\uparrow\theta_{c}$, since $y\uparrow y_{c}$ as $\theta\uparrow\theta_{c}$. 
Therefore, we have the essential smoothness and by G\"{a}rtner-Ellis theorem (for the definition of essential smoothness and statement
of G\"{a}rtner-Ellis theorem, we refer to Dembo and Zeitouni \cite{Dembo}),
$(\frac{1}{t}\int_{0}^{t}r_{s}ds\in\cdot)$ satisfies a large deviation principle
with rate function
\begin{equation}
I(x)=\sup_{\theta\in\mathbb{R}}\left\{\theta x-bcy(\theta)-\alpha(e^{ay(\theta)}-1)\right\}.
\end{equation}

(ii) For a pair $(r_{t},N_{t})$, the generator is given by
\begin{equation}
\mathcal{A}f(r,n)=bc\frac{\partial f}{\partial r}-br\frac{\partial f}{\partial r}+\frac{1}{2}\sigma^{2}r\frac{\partial^{2}f}{\partial r^{2}}
+(\alpha+\beta r)[f(r+a,n+1)-f(r,n)].
\end{equation}
Let $u(t,r):=u(\theta,t,r):=\mathbb{E}[e^{\theta N_{t}}|r_{0}=r]$. Consider $f(t,r_{t},N_{t})=\mathbb{E}[e^{\theta N_{T}}|r_{t},N_{t}]$
and $f(t,r_{t},N_{t})_{t\leq T}$ is a martingale only if $\frac{\partial f}{\partial t}+\mathcal{A}f=0$ and $f(T,r_{T},N_{T})=e^{\theta N_{T}}$.
Let $f(t,r,n)=u(t,r)e^{\theta n}$ and make the time change $t\mapsto T-t$
to change the backward equation to the forward equation, we have
\begin{equation}
\begin{cases}
\frac{\partial u}{\partial t}=bc\frac{\partial u}{\partial r}-br\frac{\partial u}{\partial r}+\frac{1}{2}\sigma^{2}r\frac{\partial^{2}u}{\partial r^{2}}
+(\alpha+\beta r)[u(t,r+a)e^{\theta}-u(t,r)],
\\
u(0,r)\equiv 1.
\end{cases}
\end{equation}
Now, by trying $u(\theta,t,r)=e^{A(t)r+B(t)}$, we get
\begin{equation}
\begin{cases}
A'(t)=-bA(t)+\frac{1}{2}\sigma^{2}A^{2}(t)+\beta(e^{aA(t)+\theta}-1),
\\
B'(t)=bcA(t)+\alpha(e^{aA(t)+\theta}-1),
\\
A(0)=B(0)=0.
\end{cases}
\end{equation}
Hence, we have $\lim_{t\rightarrow\infty}A(t)=y(\theta)$, 
where $y(\theta)$ satisfies
\begin{equation}\label{ythetaII}
-by(\theta)+\frac{1}{2}\sigma^{2}y^{2}(\theta)+\beta(e^{ay(\theta)+\theta}-1)=0,
\end{equation}
if the above equation \eqref{ythetaII} has a solution and $+\infty$ otherwise. Similar to the arguments
in (i), $y(\theta)$ is the smaller solution of $\eqref{ythetaII}$ when $\theta\leq\theta_{c}$ and $+\infty$ otherwise.
$\theta_{c}$ is to be determined as the following.
We can rewrite the equation \eqref{ythetaII} as
\begin{equation}
e^{\theta}=\left(by-\frac{1}{2}\sigma^{2}y^{2}+\beta\right)\frac{1}{\beta}e^{-ay}.
\end{equation}
Let
\begin{align}\label{yandthetaIII}
\theta_{c}&=\log\max_{y\in\mathbb{R}^{+}}\left\{\left(by-\frac{1}{2}\sigma^{2}y^{2}+\beta\right)\frac{1}{\beta}e^{-ay}\right\}
\\
&=\log\left\{\left(by_{c}-\frac{1}{2}\sigma^{2}y_{c}^{2}+\beta\right)\frac{1}{\beta}e^{-ay_{c}}\right\}\nonumber
\\
&=\log\left(\frac{b-\sigma^{2}y_{c}}{a\beta}\right)-ay_{c}\nonumber
\\
&=\log\left(\frac{\sqrt{\sigma^{4}+a^{2}b^{2}+2a^{2}\sigma^{2}\beta}
-\sigma^{2}}{a^{2}\beta}\right)
-\frac{\sigma^{2}+ab-\sqrt{\sigma^{4}+a^{2}b^{2}+2a^{2}\sigma^{2}\beta}}{\sigma^{2}},\nonumber
\end{align}
where $y_{c}=\frac{\sigma^{2}+ab-\sqrt{(\sigma^{2}+ab)^{2}-2a\sigma^{2}(b-a\beta)}}{a\sigma^{2}}$.
Hence, we have
\begin{equation}
\Gamma(\theta):=\lim_{t\rightarrow\infty}\frac{1}{t}\log u(\theta, t,r)
=
\begin{cases}
bcy(\theta)+\alpha(e^{ay(\theta)+\theta}-1) &\text{if $\theta\leq\theta_{c}$}
\\
+\infty &\text{otherwise}
\end{cases}.
\end{equation}
Since $b>\beta a$, for $y$ being positive and sufficiently small in \eqref{yandthetaIII}, we have $(by-\frac{1}{2}\sigma^{2}y^{2}+\beta)\frac{1}{\beta}e^{-ay}
\sim(\frac{b}{\beta}y+1)(1-ay)\sim 1+(\frac{b}{\beta}-1)y>1$ and thus $\theta_{c}>0$. 
Also $\Gamma(\theta)$ is differentiable for $\theta<\theta_{c}$ and
differentiating with respect to $\theta$ to \eqref{ythetaII}, we get
\begin{equation}
\frac{\partial y}{\partial\theta}=\frac{\beta e^{ay+\theta}}{b-\sigma^{2}y-\beta ae^{ay+\theta}}\rightarrow+\infty,
\end{equation}
as $\theta\uparrow\theta_{c}$ since $y\uparrow y_{c}$ as $\theta\uparrow\theta_{c}$ and
by \eqref{yandthetaIII}, we have $e^{\theta_{c}}=\frac{b-\sigma^{2}y_{c}}{a\beta}e^{-ay_{c}}$.
Therefore, we have the essential smoothness and by G\"{a}rtner-Ellis theorem (for the definition of essential smoothness and statement
of G\"{a}rtner-Ellis theorem, we refer to Dembo and Zeitouni \cite{Dembo}),
$(N_{t}/t\in\cdot)$ satisfies a large deviation principle
with rate function
\begin{equation}
I(x)=\sup_{\theta\in\mathbb{R}}\left\{\theta x-bcy(\theta)-\alpha(e^{ay(\theta)+\theta}-1)\right\}.
\end{equation}
\end{proof}

\begin{proof}[Proof of Proposition \ref{ergodiclemma}]
The lecture notes \cite{Hairer} by Hairer gives the criterion for 
the existence and uniqueness of the invariant probability measure for Markov processes.
Suppose we have a jump diffusion process with generator $\mathcal{A}$. 
If we can find $u$ such that $u\geq 0$, $\mathcal{A}u\leq C_{1}-C_{2}u$ for some constants $C_{1},C_{2}>0$, 
then, there exists an invariant probability measure. In our problem, recall that
\begin{equation}
\mathcal{A}u(r)=bc\frac{\partial u}{\partial r}-br\frac{\partial u}{\partial r}+\frac{1}{2}\sigma^{2}r\frac{\partial^{2}u}{\partial r^{2}}
+(\alpha+\beta r)[u(r+a)-u(r)].
\end{equation}
Let us try $u(r)=r$ and choose $0<C_{2}<b-a\beta$, $C_{1}>\alpha a+bc$. Then, we have
\begin{align}
\mathcal{A}u+C_{2}u&=bc-br+\alpha a+\beta ar+C_{2}r
\\
&=(bc+\alpha a)+(\beta a-b+C_{2})r\nonumber
\\
&\leq bc+\alpha a\leq C_{1}.\nonumber
\end{align}
 
Next, we will prove the uniqueness of the invariant probability measure.
To get the uniqueness of the invariant probability measure, it is sufficient to prove that for any $x,y>0$, there exists some $T>0$ such that
$\mathcal{P}^{x}(T,\cdot)$ and $\mathcal{P}^{y}(T,\cdot)$ are not mutually singular. Here $\mathcal{P}^{x}(T,\cdot)=\mathbb{P}(r^{x}_{T}\in\cdot)$,
where $r^{x}_{T}$ is $r_{T}$ starting at $r_{0}=x$.
For any $x,y>0$, conditional on the event that $r_{t}^{x}$ and $r_{t}^{y}$ have no jumps during the time interval $(0,T)$, which has a positive probability, 
the law of $\mathcal{P}^{x}(T,\cdot)$ and $\mathcal{P}^{y}(T,\cdot)$ are absolutely continuous with respect to the Lebesgue measure 
on $\mathbb{R}^{+}$, which implies that $\mathcal{P}^{x}(T,\cdot)$ and $\mathcal{P}^{y}(T,\cdot)$ are not mutually singular. 
\end{proof}

\begin{proof}[Proof of Proposition \ref{Laplaceofr}]
By Kolmogorov equation, $u(t,r)=\mathbb{E}[e^{-\theta r_{t}}|r_{0}=r]$ satisfies
\begin{equation}
\begin{cases}
\frac{\partial u}{\partial t}=bc\frac{\partial u}{\partial r}-br\frac{\partial u}{\partial r}
+\frac{1}{2}\sigma^{2}r\frac{\partial^{2}u}{\partial r^{2}}+(\alpha+\beta r)[u(t,r+a)-u(t,r)],
\\
u(0,r)=e^{-\theta r}.
\end{cases}
\end{equation}
Now, try $u(t,r)=e^{A(t)r+B(t)}$, we get the desired results.
\end{proof}

\begin{proof}[Proof of Proposition \ref{shortrate}]
(i) By Feynman-Kac formula, $P(t,T,r)$ satisfies the following integro-partial differential equation,
\begin{equation}
\begin{cases}
\frac{\partial P}{\partial t}+bc\frac{\partial P}{\partial r}
-br\frac{\partial P}{\partial r}+\frac{1}{2}\sigma^{2}r\frac{\partial^{2}P}{\partial r^{2}}
\\
\qquad\qquad\qquad
+(\alpha+\beta r)[P(t,T,r+a)-P(t,T,r)]-rP(t,T,r)=0,
\\
P(T,T,r)=1.
\end{cases}
\end{equation}
Let us try $P(t,T,r)=e^{A(t)r+B(t)}$. We get
\begin{equation}
\begin{cases}
A'(t)-bA(t)+\frac{1}{2}\sigma^{2}A(t)^{2}+\beta(e^{aA(t)}-1)-1=0,
\\
B'(t)+bcA(t)+\alpha(e^{aA(t)}-1)=0,
\\
A(T)=B(T)=0.
\end{cases}
\end{equation}

(ii)
By using the same arguments as in the proof of Theorem \ref{LDP}, we have the following asymptotic result,
\begin{equation}
\lim_{T\rightarrow\infty}\frac{1}{T}\log P(t,T,r)=bcx_{\ast}+\alpha(e^{ax_{\ast}}-1),
\end{equation}
where $x_{\ast}$ is the unique negative solution to the following equation,
\begin{equation}
-bx+\frac{1}{2}\sigma^{2}x^{2}+\beta(e^{ax}-1)-1=0.
\end{equation}
\end{proof}

\section*{Acknowledgements}

The author is supported by NSF grant DMS-0904701, DARPA grant and MacCracken Fellowship at New York University.
The author is very grateful to an anonymous referee for the helpful comments and suggestions.

\end{document}